\documentclass[reqno]{amsart}
\numberwithin{equation}{section}

\usepackage{bbm}
\usepackage{comment}
\usepackage{verbatim}
\usepackage{mathrsfs}
\usepackage{mathtools}
\usepackage{latexsym}
\usepackage{epsfig}
\usepackage{amsmath}
\usepackage[usenames,dvipsnames]{xcolor}
\usepackage{graphics}
\usepackage[utf8]{inputenc}
\usepackage{amssymb}
\usepackage{amsthm}
\usepackage[alphabetic]{amsrefs}
\usepackage{amsopn}
\usepackage{amscd}
\usepackage[all,knot]{xy}
\xyoption{all}
\usepackage{rotating}
\usepackage{tikz}
\usepackage{hyperref}
\usepackage{tikz}
\usepackage{tikz-cd}

\theoremstyle{plain}
\newtheorem{thm}{Theorem}[section]

\newtheorem{prop}[thm]{Proposition}
\newtheorem{cor}[thm]{Corollary}

\newtheorem*{thm*}{Theorem}
\newtheorem*{lem*}{Lemma}
\newtheorem*{prop*}{Proposition}
\newtheorem*{cor*}{Corollary}

\theoremstyle{definition}

\newtheorem*{defn*}{Definition}

\newtheorem{ex}[thm]{Example}
{}
\newtheorem{rem}[thm]{Remark}
\newtheorem*{rem*}{Remark}

{}
{}
{}
{}
\newtheorem{qn}[thm]{Question}{}
{}

\theoremstyle{remark}
{}
{}
{}

\def\to{\longrightarrow} 

\def\ZZ{\mathbb{Z}}
\DeclareUnicodeCharacter{221E}{$\infty$}

\def\sfD{\mathsf{D}}

\def\sfT{\mathsf{T}}

\newcommand{\mysetminusD}{\hbox{\tikz{\draw[line width=0.6pt,line cap=round] (3pt,0) -- (0,6pt);}}}
\newcommand{\mysetminusT}{\mysetminusD}
\newcommand{\mysetminusS}{\hbox{\tikz{\draw[line width=0.45pt,line cap=round] (2pt,0) -- (0,4pt);}}}
\newcommand{\mysetminusSS}{\hbox{\tikz{\draw[line width=0.4pt,line cap=round] (1.5pt,0) -- (0,3pt);}}}

\newcommand{\mysetminus}{\mathbin{\mathchoice{\mysetminusD}{\mysetminusT}{\mysetminusS}{\mysetminusSS}}}

\def\op{\mathrm{op}}

\DeclareMathOperator{\add}{add}

\DeclareMathOperator{\Hom}{Hom}

\DeclareMathOperator{\modu}{\mathsf{mod}}

\DeclareMathOperator{\smodu}{\underline{\mathsf{mod}}}

\DeclareMathOperator{\RHom}{\mathrm{R}Hom}

\DeclareMathOperator{\Ext}{Ext}

\DeclareMathOperator{\Fun}{Fun}

\DeclareMathOperator{\cone}{cone}

\DeclareMathOperator{\thick}{thick}
\DeclareMathOperator{\Thick}{Thick}

\DeclareMathOperator{\rad}{rad}

\definecolor{internationalkleinblue}{rgb}{0.0, 0.18, 0.65}

\title{Ore's theorem for thick subcategories}

\thanks{SG was supported by VILLUM FONDEN (Grant Number VIL42076), GS was supported by the Danmarks Frie Forskningsfond (grant ID: 10.46540/4283-00116B)}

\author{Sira Gratz}
\address{Sira Gratz, Aarhus University, Department of Mathematics, Ny Munkegade 118, bldg. 1530
DK-8000 Aarhus C, Denmark
}
\email{sira@math.au.dk}

\author{Greg Stevenson}
\address{Greg Stevenson, Aarhus University, Department of Mathematics, Ny Munkegade 118, bldg. 1530
DK-8000 Aarhus C, Denmark
}
\email{greg@math.au.dk}

\keywords{}

\begin{document}

\begin{abstract}
\noindent We characterize those finite groups for which the bounded derived category of finite dimensional representations over an algebraically closed field of characteristic $p$ has distributive lattice of thick subcategories: they are precisely the $p$-nilpotent groups. Along the way we give necessary and sufficient criteria for the bounded derived category and perfect complexes of a finite dimensional $k$-algebra to have distributive lattices of thick subcategories.
\end{abstract}

\maketitle




\section{Introduction}

Given an object one can consider its collection of subobjects. In general, these subobjects form a lattice and it is natural to wonder how properties of this lattice reflect the properties of the original object. Many instances of this question have been considered over the years. For instance, in 1938 Ore proved that the collection of subgroups of a finite group $G$ is a distributive lattice precisely when $G$ is a cyclic group \cite{Ore}.

The purpose of this note is to consider what we can learn from distributivity in the derived setting. We replace the group $G$ by one of the bounded derived category $\sfD^\mathrm{b}(kG)$, the perfect complexes $\sfD^\mathrm{perf}(kG)$, or the stable category $\smodu kG$, where $k$ is an algebraically closed field of characteristic $p$, and we study the lattice of thick subcategories in each case. It turns out that these lattices are distributive more often than the lattice of subgroups, but it is still a quite restrictive condition. The analogue of Ore's theorem is:

\begin{thm*}[Corollary~\ref{pnilpotent3}]
Let $G$ be a finite group and $k$ an algebraically closed field of characteristic $p$. Then the following are equivalent:
\begin{itemize}
\item[(1)] $G$ is $p$-nilpotent;
\item[(2)] The lattice of thick subcategories of $\sfD^\mathrm{b}(kG)$ is distributive;
\item[(3)] The lattice of thick subcategories of $\sfD^\mathrm{perf}(kG)$ is distributive;
\end{itemize}
\end{thm*}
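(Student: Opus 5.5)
The plan is to reduce both directions to one statement about blocks: the thick subcategory lattice is distributive exactly when every block of the algebra has a unique simple module. We then combine this with two group-theoretic facts.

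First, both $\sfD^\mathrm{b}(kG)$ and $\sfD^\mathrm{perf}(kG)$ split as products over the blocks $B$ of $kG$. Every thick subcategory splits accordingly, so the lattice of thick subcategories is the product of the lattices for the blocks. A product of lattices is distributive iff each factor is, so it suffices to work one block at a time.

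\emph{(1)$\Rightarrow$(2),(3).} For $G$ $p$-nilpotent we have $G=N\rtimes P$ with $N$ a normal $p'$-group. I will use the classical fact that every block $B$ of $kG$ is then isomorphic to a full matrix algebra $\mathrm{Mat}_n(kD)$, where $D\le P$ is a defect group; this uses that $k$ is algebraically closed. Hence $B$ is Morita equivalent to $kD$, and we reduce to $kD$ with $D$ a $p$-group.

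For $kD$ the trivial module $k$ is the only simple module, so every finite-dimensional module $M$ has a filtration with subquotients $k$. Consequently $M\otimes_k X\in\thick(X)$ for every complex $X$. Thus every thick subcategory of $\sfD^\mathrm{b}(kD)$ or $\sfD^\mathrm{perf}(kD)$ is automatically a tensor ideal. I would then invoke the support-theoretic classification of thick tensor ideals, due to Benson--Carlson--Rickard for $\sfD^\mathrm{perf}$ and $\smodu$, and to Benson--Iyengar--Krause(--Pevtsova) for $\sfD^\mathrm{b}$. It identifies these lattices with lattices of (specialization closed) subsets of $\mathrm{Spec}^h H^*(D,k)$, and any lattice of subsets under $\cup,\cap$ is distributive.

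\emph{(2) or (3)$\Rightarrow$(1).} I would prove that if a block $B$ has two non-isomorphic simple modules, then its lattice contains a copy of the diamond $M_3$ and so is not distributive. Since $B$ is indecomposable, we can choose simples $S\not\cong T$ in $B$ with $\Ext^1(S,T)\neq 0$. Take a nonsplit extension $0\to T\to E\to S\to 0$. The three candidate atoms are $\thick(S)$, $\thick(T)$ and $\thick(E)$. Any two of them generate $\thick(S\oplus T)$, because of the triangle $T\to E\to S\to$. The main obstacle is showing that the pairwise meets are $0$, for instance $\thick(E)\cap\thick(S)=0$. My first attempt is a "composition factor" or support argument. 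Objects of $\thick(S)$ have all homology in $\add S$. For objects of $\thick(E)$ I would try to show that every nonzero object has homology with both $S$ and $T$ among its composition factors. One way is to use a functor to a suitable quotient, such as $\sfD^\mathrm{b}(B)/\thick(T)$, in which $E$ becomes isomorphic to $S$, and to compare with a second functor that kills $S$. If this approach is too crude, I would instead compare Grothendieck group classes, which are $[S]$, $[T]$ and $[S]+[T]$, together with the behaviour of the functors $\Hom(P_S,-)$ and $\Hom(P_T,-)$.

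For $\sfD^\mathrm{perf}$ I would run the same argument with the projective covers $P_S$ and $P_T$, taking the cone of a nonzero map $P_T\to P_S$ in place of $E$; the triangle again shows that any two of the three generate $\thick(P_S\oplus P_T)$.

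This shows that distributivity forces every block of $kG$ to have a unique simple module. Finally I would cite the known theorem that all blocks of $kG$ have a unique simple module iff $G$ is $p$-nilpotent. Already the principal block having only the trivial simple module characterizes $p$-nilpotence, by the Frobenius normal $p$-complement theorem via fusion. This gives (1).
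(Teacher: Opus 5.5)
Your block reduction, the forward direction and the final group-theoretic step are fine. For the forward direction you invoke a classification of thick tensor ideals, whereas the paper only needs that thick tensor ideals of a rigid tt-category form a coherent frame. Note also that Benson--Carlson--Rickard concerns $\smodu$, and that $\Thick(\sfD^\mathrm{perf}(kD))$ is just $\{0,\sfD^\mathrm{perf}(kD)\}$.

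The genuine gap is in the converse, exactly at the step you call the main obstacle. The vanishing of $\thick(E)\cap\thick(S)$ and $\thick(E)\cap\thick(T)$, and of the analogous meets for the cone in $\sfD^\mathrm{perf}$, is the whole content of this direction, and none of your suggestions proves it. The $K_0$ comparison cannot work, because it only records Euler characteristics. For example, $S\oplus\Sigma S\in\thick(S)$ has class $0$, so knowing $[X]\in\ZZ([S]+[T])\cap\ZZ[S]=0$ tells you nothing about $X$. The comparison of two Verdier quotients is not an argument as stated. Also, objects of $\thick(S)$ have homology all of whose composition factors are $S$, not homology in $\add S$, since $S$ may have self-extensions. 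For $\sfD^\mathrm{perf}$, ``the same argument'' inherits the gap. Moreover, an arbitrary nonzero $P_T\to P_S$ does not suffice: the map must be compatible with $E$, which is why the paper takes $f\in J\mysetminus J^2$.

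The missing idea is a single orthogonality relation. Let $\pi\colon P_S\to E$ be the projective cover. Choose $g\colon P_T\to P_S$ with $\pi g\neq 0$, by lifting $P_T\to T=\rad E$ through the surjection $\rad P_S\to\rad E$, and put $C=\cone(g)$. Since $\Hom(P_S,E)$ and $\Hom(P_T,E)$ are one-dimensional and $g^*\neq 0$, you get $\RHom(C,E)=0$, hence $\RHom(X,Y)=0$ for all $X\in\thick(C)$ and $Y\in\thick(E)$.

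Read on $\thick(E)$: the map $\RHom(g,-)\colon\RHom(P_S,-)\to\RHom(P_T,-)$ is an isomorphism there. So $\dim e_SH^n(X)=\dim e_TH^n(X)$ for all $n$ and all $X\in\thick(E)$, which kills both meets in $\sfD^\mathrm{b}$.

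Read on $\thick(C)$: the triangle $T\to E\to S\to\Sigma T$ gives $\RHom(-,S)\cong\Sigma\RHom(-,T)$. Objects of $\thick(P_S)$ (resp.\ $\thick(P_T)$) are minimal complexes over $\add P_S$ (resp.\ $\add P_T$), detected by $\RHom(-,S)$ (resp.\ $\RHom(-,T)$). Hence both meets vanish in $\sfD^\mathrm{perf}$; this is precisely the paper's proof of Theorem~\ref{thm:dist2}.

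Finally, you need not treat $\sfD^\mathrm{b}$ separately. $\Thick(\sfD^\mathrm{perf}(kG))$ is an interval in $\Thick(\sfD^\mathrm{b}(kG))$, so (2)$\Rightarrow$(3) is immediate. For $\sfD^\mathrm{b}$ the paper sidesteps the issue in Theorem~\ref{thm:dist1} by intersecting $\thick(P_j)$ with $\bigvee_i\thick(S_i)$. There the needed vanishing $\thick(P_j)\cap\thick(S_i)=0$ for $i\neq j$ is trivial.
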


The situation for $\smodu kG$ is more involved and we give some examples and prove that the lattice of thick subcategories is distributive in several examples which are not $p$-nilpotent.

The theorem follows from an analysis of distributivity for finite dimensional algebras. This turns out to be an extremely restrictive characterization as the following theorem, which summarises the results of Corollaries~\ref{cor:dist1} and \ref{cor:dist2}, shows.

\begin{thm*}
Let $A$ be a finite dimensional $k$-algebra. Then the lattice of thick subcategories of $\sfD^\mathrm{b}(A)$ (respectively $\sfD^\mathrm{perf}(A)$) is distributive if and only if $A$ is Morita equivalent to a product of local algebras $A_i$ such that each $\sfD^\mathrm{b}(A_i)$ (respectively $\sfD^\mathrm{perf}(A_i)$) has a distributive lattice of thick subcategories.
\end{thm*}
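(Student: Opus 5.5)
The plan is to prove both directions separately, treating $\sfD^\mathrm{b}(A)$ and $\sfD^\mathrm{perf}(A)$ in parallel.

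\emph{Reduction to a basic algebra.} Morita equivalence induces equivalences of both $\sfD^\mathrm{b}$ and $\sfD^\mathrm{perf}$, so we may assume $A$ is basic. Write $A=\prod_i A_i$ with each $A_i$ connected. Then $\sfD(A)\simeq\prod_i\sfD(A_i)$, and thick subcategories of a finite product are exactly products of thick subcategories of the factors. Hence the lattice of thick subcategories is the product lattice $\prod_i \Thick(A_i)$, and this is distributive if and only if each factor is. The ``if'' direction follows immediately. For ``only if'' it is enough to show: if $A$ is basic, connected and has at least two simple modules, then the lattice is not distributive.

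\emph{Producing a non-distributive configuration.} Suppose $A$ is connected and has distinct primitive idempotents $e\neq f$ with $\Hom(P_e,P_f)\neq 0$ or $\Ext^*(S_e,S_f)\neq0$. Connectedness of the Ext-quiver lets us pick such a pair with an arrow between them. I would look for three thick subcategories $\mathcal{X},\mathcal{Y},\mathcal{Z}$ with
\[
\mathcal{X}\cap(\mathcal{Y}\vee\mathcal{Z})\neq(\mathcal{X}\cap\mathcal{Y})\vee(\mathcal{X}\cap\mathcal{Z}),
\]
that is, a diamond $M_3$ or a pentagon $N_5$ sublattice. In $\sfD^\mathrm{perf}$ take $\mathcal{Y}=\thick(P_e)$, $\mathcal{Z}=\thick(P_f)$ and $\mathcal{X}=\thick(C)$, where $C=\cone(g)$ for a nonzero map $g\colon P_f\to P_e$ (or the reverse direction). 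Then $\mathcal{Y}\vee\mathcal{Z}\ni C$, so the left side is $\mathcal{X}$. On the right side we need $\mathcal{X}\cap\thick(P_e)=0=\mathcal{X}\cap\thick(P_f)$. Since supports are tracked by idempotents, one should check that any nonzero object of $\thick(C)$ has nonzero image under both $\RHom(P_e,-)$ and $\RHom(P_f,-)$ in a coherent way, so it cannot lie in $\thick(P_e)$. For $\sfD^\mathrm{b}$ the analogue uses $S_e$, $S_f$ and a nonsplit extension $E$ of $S_e$ by $S_f$ (suitably shifted). Here $\thick(S_e)\vee\thick(S_f)\ni E$, while $\thick(E)$ should meet each $\thick(S_\ast)$ trivially. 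If that intersection fails, a cleaner alternative is to use the three subcategories $\thick(S_e)$, $\thick(S_f)$ and $\thick(E)$, and show that any two of them generate $\thick(S_e\oplus S_f)$ while pairwise intersections vanish. That gives an $M_3$.

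\emph{Main obstacle.} The hard step is verifying the vanishing of intersections such as $\thick(C)\cap\thick(P_e)=0$, because thick closure can produce unexpected objects. My first attempt would be a Grothendieck-group or rank argument: the classes in $K_0$ of objects of $\thick(C)$ lie in $\ZZ[C]=\ZZ([P_e]-[P_f])$. An object in $\thick(P_e)$ is a summand of sums of shifts of $P_e$, and since $\operatorname{End}(P_e)$ is local, Krull--Schmidt shows it is a complex of copies of $P_e$ whose class is a multiple of $[P_e]$. Comparing these forces the class to vanish. Vanishing class alone is not enough, so I would supplement it by applying the exact functor $\RHom(P_f,-)$ restricted to $\thick(P_e)$ and comparing with its value on $\thick(C)$, or by using the localization $\sfD(A)\to\sfD(fAf)$ with kernel $\thick$ generated by the $e$-part. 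If the pentagon/diamond approach resists, a fallback is to show that distributivity forces every thick subcategory to be determined by a support datum, and that a nonlocal connected algebra has too many thick subcategories for that.
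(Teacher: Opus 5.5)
Your reduction to a basic connected algebra with at least two simples is fine. For $\sfD^\mathrm{perf}$, your triple $\thick(C)$, $\thick(P_e)$, $\thick(P_f)$ with $C=\cone(g)$ is exactly the paper's configuration. But the whole content of the proof is the vanishing $\thick(C)\cap\thick(P_e)=0=\thick(C)\cap\thick(P_f)$, and none of your suggested tools gives it.

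\begin{itemize}
\item The $K_0$ claim is unjustified. $\thick(C)$ is closed under direct summands, so its objects need not have classes in $\ZZ[C]$. And, as you note, a zero class does not detect zero objects anyway.
\item Testing with the covariant functors $\RHom(P_e,-)$ and $\RHom(P_f,-)$ cannot work. $\RHom(P_f,-)$ does not vanish on $\thick(P_e)$; indeed $g$ itself is a nonzero element of $\Hom(P_f,P_e)$.
\end{itemize}

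The missing idea is to test contravariantly against the simples. Take $g$ to be an arrow, and let $M$ be the length-two module with top $S_e$ and socle $S_f$ on which $g$ acts nontrivially. Then $g^*\colon\Hom(P_e,M)\to\Hom(P_f,M)$ is an isomorphism of one-dimensional spaces, so $\RHom(C,M)=0$. Applying $\RHom(X,-)$ to $0\to S_f\to M\to S_e\to 0$ then gives $\RHom(X,S_e)\cong\Sigma\RHom(X,S_f)$ for every $X\in\thick(C)$. Every object of $\thick(P_e\oplus P_f)$ is a minimal complex over $\add(P_e\oplus P_f)$, so it is zero if and only if $\RHom(X,S_e\oplus S_f)=0$. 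If $X\in\thick(C)\cap\thick(P_e)$, then $\RHom(X,S_f)=0$, hence $\RHom(X,S_e)=0$ and $X=0$. The case of $P_f$ is symmetric.

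Restricting to an arrow is essential, not cosmetic; an arbitrary nonzero $g$, as you wrote, can fail. Take $A$ given by an arrow $a\colon 1\to 2$ and a loop $x$ at $2$ with $x^2=0$, and let $g$ be the nonzero map of projectives given by the path $xa$. If $\RHom(\cone(g),Z)=0$ for some $Z\in\sfD(A)$, then $xa\colon e_1Z\to e_2Z$ is a quasi-isomorphism. So $x$ is surjective on $H^*(e_2Z)$, and since $x^2=0$ this forces $H^*(e_2Z)=0=H^*(e_1Z)$. Hence $\cone(g)$ is a classical generator, both intersections are nonzero, and this $g$ witnesses nothing.

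For $\sfD^\mathrm{b}$ the paper avoids extensions altogether. Since $\RHom(P_j,S_i)=0$ for $i\neq j$, one has $\thick(P_j)\cap\thick(S_i)=0$. Distributivity applied to $\thick(P_j)\cap\bigvee_i\thick(S_i)$ then forces $P_j\in\thick(S_j)$, whence $\Hom(P_k,P_j)=0$ for $k\neq j$.

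Your $M_3$ with $\thick(S_e)$, $\thick(S_f)$, $\thick(E)$ can also be made to work, by the dual of the argument above. Let $E$ have top $S_f$ and socle $S_e$, and let $h\colon P_e\to P_f$ be the corresponding arrow. Then $\RHom(\cone(h),E)=0$, while $\RHom(\cone(h),X)\neq 0$ for nonzero $X$ in $\thick(S_e)$ or $\thick(S_f)$. As written, though, this step is also missing from your proposal.
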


The article also includes a number of examples and we pose several questions.



\section{Notation}

For a triangulated category $\sfT$ we denote by $\Thick(\sfT)$ the complete lattice of thick subcategories of $\sfT$. The meet $\wedge$ is given by intersection and the join $\vee$ by the smallest thick subcategory containing the union of the thick subcategories in question. For $X\in \sfT$ we let $\thick(X)$ be the smallest thick subcategory containing $X$. Further details can be found in various places, for instance \cite{SiraGreg}.

Throughout we work over an algebraically closed field $k$ (often of characteristic $p$ where $p$ divides the order of some finite group $G$). We denote by $A$ a finite dimensional $k$-algebra which, when convenient, we assume is basic as we are only concerned with properties that are Morita-invariant. Thus $A$ can be represented as a quotient of a path algebra. We denote by $J$ the radical $\rad(A)$. We use $\sfD^\mathrm{b}(A)$ to denote the bounded derived category of finite dimensional $A$-modules and $\sfD^\mathrm{perf}(A)$ to denote the thick subcategory of perfect complexes.






\section{Distributivity for the bounded derived category}

We start by showing that distributivity of the lattice of thick subcategories for the bounded derived category is a strong constraint.

\begin{thm}\label{thm:dist1}
If $A$ is finite dimensional $k$-algebra and $\Thick(\sfD^\mathrm{b}(A))$ is distributive then $A$ is Morita equivalent to a product of local rings $A_i$ such that each $\Thick(\sfD^\mathrm{b}(A_i))$ is distributive.
\end{thm}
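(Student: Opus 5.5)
We first reduce to the connected case. Since $A$ is Morita equivalent to a basic algebra, assume $A$ is basic and decompose it into its blocks, $A = A_1 \times \cdots \times A_n$ with each $A_i$ connected (indecomposable as an algebra). Then $\sfD^\mathrm{b}(A) \simeq \prod_i \sfD^\mathrm{b}(A_i)$. Every thick subcategory of a finite product of triangulated categories is a product of thick subcategories of the factors, because thick subcategories are closed under direct summands and the projections onto the factors are summands. Hence $\Thick(\sfD^\mathrm{b}(A)) \cong \prod_i \Thick(\sfD^\mathrm{b}(A_i))$ as lattices. A product of lattices is distributive if and only if each factor is, so each $\Thick(\sfD^\mathrm{b}(A_i))$ is distributive. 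It remains to show that a connected basic $A$ with distributive lattice of thick subcategories is local.

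Suppose instead that $A$ is connected and has at least two simple modules. Connectedness of the quiver gives an arrow between two distinct vertices $i \neq j$, and so $\Ext^1_A(S_i,S_j) \neq 0$ after possibly swapping $i$ and $j$. Choose a non-split extension $0 \to S_j \to E \to S_i \to 0$. Put $a = \thick(E)$, $b = \thick(S_i)$ and $c = \thick(S_j)$. Since $E \in b \vee c$, we have $a \wedge (b \vee c) = a$. The aim is to show $(a\wedge b) \vee (a \wedge c) = 0$, which gives $a \neq (a\wedge b)\vee(a\wedge c)$ and so contradicts distributivity. This comes down to showing $\thick(E) \cap \thick(S_i) = 0$ and $\thick(E) \cap \thick(S_j) = 0$. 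In fact one expects $a,b,c$ to be the three atoms of a diamond $M_3$ below $\thick(S_i \oplus S_j)$. The pairwise joins are already all equal to $\thick(S_i\oplus S_j)$, using the triangle $S_j \to E \to S_i \to$.

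The main obstacle is the vanishing of these intersections. The plan is as follows. Suppose $0 \neq X \in \thick(E)\cap\thick(S_i)$. Every cohomology module of $X$ has all composition factors isomorphic to $S_i$, so the exact functor $\Hom_A(P_j,-)$ (multiplication by $e_j$ on cohomology) kills $X$. First I would try to exploit the structure of $\thick(E)$ itself. The module $E$ is uniserial of length two with $\mathrm{End}(E)=k$, since the extension is non-split and $i \neq j$. I would then analyse the thick subcategory generated by $E$ inside the finite-length setting, passing to the idempotent subalgebra $B = (e_i+e_j)A(e_i+e_j)$ and the corresponding Serre subcategory of modules with factors $S_i, S_j$. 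The goal is to show that any nonzero object of $\thick(E)$ has cohomology with $S_j$ as a composition factor. A first attempt is a Grothendieck group argument: classes in $\thick(E)$ built by cones and shifts lie in $\mathbb{Z}([S_i]+[S_j])$. The catch is that summands can break this, so the argument must be supplemented by showing that $\thick(E)$ is already closed under summands of such cones. For this I would use that $E$ generates a thick subcategory equivalent to a derived category of a local dg algebra $\RHom(E,E)$, whose objects have nonvanishing $e_j$-part. If this direct approach stalls, the fallback is to choose the extension more cleverly, or to use the other triangle $S_i[-1]\to S_j \to E$ symmetrically, so that at least one of the two intersections can be controlled. Failure of distributivity already follows from a single nonzero gap, for example $a \wedge c = 0$ together with $a \wedge b \subsetneq a$, since then $(a\wedge b)\vee(a\wedge c) = a\wedge b \neq a$.
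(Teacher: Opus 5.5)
Your reduction to blocks and your choice of $a=\thick(E)$, $b=\thick(S_i)$, $c=\thick(S_j)$ are fine. But everything rests on $\thick(E)\cap\thick(S_i)=0=\thick(E)\cap\thick(S_j)$, and that is exactly the step you leave unproved, so there is a genuine gap at the only non-formal step.

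\begin{itemize}
\item The Grothendieck group idea cannot work even if you could control summands, since nonzero objects of $\thick(S_i)$ such as $S_i\oplus\Sigma S_i$ have class $0$.
\item The remark that $\RHom(E,E)$ is a ``local'' dg algebra (it is coconnective with $H^0=k$) whose objects ``have nonvanishing $e_j$-part'' just restates what has to be shown. Nothing you say explains why the one-dimensional module $Ee_j$ should detect perfect $\RHom(E,E)$-modules.
\item The fallback is not an argument.
\end{itemize}

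The missing idea is to let the arrow itself act as a natural transformation. Work with right modules and $P_l=e_lA$.
\begin{itemize}
\item If $m\in Ee_i$ is nonzero then $E=mA$, so $0\neq Ee_j=m\,e_iAe_j$, and there is $a\in e_iJe_j$ with $ma\neq 0$.
\item Left multiplication by $a$ is a map $f\colon P_j\to P_i$. On cohomology, $f^*\colon\RHom(P_i,-)\to\RHom(P_j,-)$ is right multiplication by $a$ from $H^*(X)e_i$ to $H^*(X)e_j$.
\item On $E$ this is a nonzero map $k\to k$, hence invertible. Equivalently $\RHom(\cone(f),E)=0$; this is the same vanishing that drives the proof of Theorem~\ref{thm:dist2}, used from the other side.
\item The objects on which a morphism of exact functors is invertible form a thick subcategory, so $f^*$ is invertible on all of $\thick(E)$.
\item Every $X\in\thick(E)$ has cohomology whose composition factors are among $S_i,S_j$ only. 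Hence $H^*(X)e_i=0\Leftrightarrow H^*(X)e_j=0$ forces both intersections to vanish, and your $M_3$ argument then works.
\end{itemize}

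The paper takes a shorter route that avoids length-two modules altogether. It intersects $\thick(P_j)$ with $\sfD^{\mathrm b}(A)=\bigvee_i\thick(S_i)$. Here $\thick(P_j)\cap\thick(S_i)=0$ for $i\neq j$ is immediate from $\RHom(P_j,S_i)=0$. Distributivity then gives $P_j\in\thick(S_j)$, hence $\Hom(P_k,P_j)=0$ for $k\neq j$. This yields the product decomposition directly, with no prior splitting into blocks.
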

\begin{proof}
Without loss of generality $A$ is basic with simple modules $S_i$ and corresponding indecomposable projectives $P_i$ for $1\leq i \leq n$. We always have
\[
\sfD^\mathrm{b}(A) = \thick(S_1,\ldots,S_n) = \bigvee_{i=1}^n \thick(S_i).
\]
So for each $1\leq j \leq n$ distributivity yields that
\begin{align*}
\thick(P_j) &= \thick(P_j) \cap \sfD^\mathrm{b}(A) \\
&= \thick(P_j) \cap \left( \bigvee_{i=1}^n \thick(S_i) \right) \\
&= \bigvee_{i=1}^n \thick(P_j) \cap \thick(S_i).
\end{align*}
Of course $\RHom(P_j, S_i) = 0$ if $i\neq j$ and it follows that $\thick(P_j) \cap \thick(S_i) = 0$ if $i\neq j$. Indeed, if some non-zero $X$ were in this intersection then it would satisfy $X\in \thick(P_j)$ and hence $\RHom(P_j, X)\neq 0$ but also $X\in \thick(S_i) \subseteq \thick(P_j)^\perp$ and hence $\RHom(P_j, X)=0$ which is a difficult combination to achieve. We conclude that
\[
\thick(P_j) = \thick(P_j) \cap \thick(S_j)
\]
or in other words $P_j \in \thick(S_j)$. As above $\RHom(P_k, S_j)=0$ for $k\neq j$ and so $\RHom(P_k, P_j) = 0$ for $k\neq j$. Thus 
\[
A = \Hom(\oplus_i P_i, \oplus_i P_i) = \oplus_{i,j} \Hom(P_i, P_j) = \oplus_i \Hom(P_i,P_i)
\]
is the product of the local algebras $A_i = \Hom(P_i,P_i)$. 

The final statement follows immediately since $\sfD^\mathrm{b}(A_i) = \thick(S_i) \subseteq \sfD^\mathrm{b}(A)$ inherits distributivity from the ambient category.
\end{proof}

\begin{cor}\label{cor:dist1}
If $A$ is a finite dimensional $k$-algebra then $\Thick(\sfD^\mathrm{b}(A))$ is distributive if and only if $A$ is Morita equivalent to a product of local algebras $A_i$ such that each $\Thick(\sfD^\mathrm{b}(A_i))$ is distributive.
\end{cor}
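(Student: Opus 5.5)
One direction is exactly Theorem~\ref{thm:dist1}, so I only need the converse. Suppose $A$ is Morita equivalent to $\prod_{i=1}^n A_i$ with each $A_i$ local and each $\Thick(\sfD^\mathrm{b}(A_i))$ distributive. Morita equivalence induces an equivalence of bounded derived categories, and hence an isomorphism of lattices of thick subcategories. So I may assume $A = \prod_i A_i$. The plan is to show that $\Thick(\sfD^\mathrm{b}(A)) \cong \prod_i \Thick(\sfD^\mathrm{b}(A_i))$ as lattices. Since a product of distributive lattices is distributive (meet and join are computed componentwise), this will finish the proof.

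First, the central idempotents $e_i$ decompose every module, and hence every bounded complex, functorially as $X = \bigoplus_i e_iX$. Moreover $\Hom(e_iX, e_jY)=0$ for $i\neq j$ in $\sfD^\mathrm{b}(A)$. This gives an orthogonal decomposition $\sfD^\mathrm{b}(A) \simeq \prod_i \sfD^\mathrm{b}(A_i)$ as triangulated categories, where $\sfD^\mathrm{b}(A_i)$ is identified with $\thick(S_i)$.

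Next, define maps in both directions:
\[
\mathcal{L} \mapsto (\mathcal{L}\cap \sfD^\mathrm{b}(A_i))_i, \qquad (\mathcal{L}_i)_i \mapsto \{X : e_iX \in \mathcal{L}_i \text{ for all } i\}.
\]
The second set is clearly thick. The key point is that a thick subcategory $\mathcal{L}$ is closed under direct summands, so $X\in\mathcal{L}$ forces each $e_iX\in\mathcal{L}$. From this, $\mathcal{L}$ is recovered from its components because $\mathcal{L} = \{X : e_iX\in \mathcal{L}\cap\sfD^\mathrm{b}(A_i)\ \forall i\}$, using closure under finite sums. Conversely, for a tuple $(\mathcal{L}_i)$ the $i$-th component of the associated subcategory is exactly $\mathcal{L}_i$. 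Both maps are order preserving, so they give a lattice isomorphism.

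No step looks like a real obstacle. The only point needing care is checking that the product decomposition is compatible with triangles, so that the componentwise subcategory really is triangulated. This holds because cones can be taken componentwise, as the $e_i$ act on the whole complex.
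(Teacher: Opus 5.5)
Your proposal is correct and matches the paper's argument: the forward direction is Theorem~\ref{thm:dist1}, and the converse uses $\Thick(\sfD^\mathrm{b}(A)) = \Thick(\prod_i \sfD^\mathrm{b}(A_i)) = \prod_i \Thick(\sfD^\mathrm{b}(A_i))$. The paper simply asserts that identification, whereas you verify it explicitly via the central idempotents and closure of thick subcategories under direct summands.
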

\begin{proof}
We just proved the non-trivial direction. Conversely, if $A$ is Morita equivalent to a product of local rings $A_i$ such that each $\Thick(\sfD^\mathrm{b}(A_i))$ is distributive then
\[
\Thick(\sfD^\mathrm{b}(A)) = \Thick\left(\prod_i \sfD^\mathrm{b}(A_i)\right) = \prod_i \Thick(\sfD^\mathrm{b}(A_i))
\]
is distributive.
\end{proof}



\section{An application to modular representation theory}

In this section we fix a field $k$ of characteristic $p$ and use Theorem~\ref{thm:dist1} to characterise those finite groups $G$ for which $\Thick(\sfD^\mathrm{b}(kG))$ is a distributive lattice. In order to prove our result we need a little preparation concerning block theory.

A group $G$ is said to be $p$-nilpotent if it has a normal $p$-complement, i.e.\ $G$ is of the form $N \rtimes P$ where $N$ has order prime to $p$ and $P$ is a $p$-group (and hence a $p$-Sylow subgroup of $G$). We can understand the blocks of $kG$ as follows using \cite{CMT}*{Lemma~3.1}. If $V$ is a simple module for the semisimple algebra $kN$ with corresponding central idempotent $e$ the situtation is controlled by the inertial subgroup
\[
I_G(V) = \{ g\in G \mid V^g \cong V\}
\]
where $G$ acts on $N$ and hence on $\modu kN$ by conjugation. The idempotent $e$ gives a block $b$ of $kI_G(V)$ which is Morita equivalent to a corresponding block $B$ of $kG$. If $I_G(V) = G$ then $b = B$ is Morita equivalent to $kP$.

If $I_G(V)$ is a proper subgroup of $G$ then we need to understand the block $b$. In order to do this, it's enough to observe that $I_G(V) = N \rtimes Q$ is itself $p$-nilpotent with the same $p$-complement $N$ as we obviously have $N\unlhd I_G(V)$. Applying the same argument, for the simple $kN$-module $V$ with idempotent $e$, to $kI_G(V)$ we are in the case where the inertial group is all of $I_G(V)$ so $b$, and hence $B$, is Morita equivalent to $kQ$.

In summary, every block of $kG$ is Morita equivalent to a group algebra of a $p$-group. From this we immediately deduce the following result.

\begin{prop}\label{pnilpotent1}
If $G$ is $p$-nilpotent then $\Thick(\sfD^\mathrm{b}(kG))$ is distributive.
\end{prop}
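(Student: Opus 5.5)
The plan is to use the block decomposition from the preceding discussion together with the easy direction, Corollary~\ref{cor:dist1}. Write $kG = \prod_B B$ as a product of its blocks. The discussion above shows that, since $G$ is $p$-nilpotent, each block $B$ is Morita equivalent to a group algebra $kQ$ for some $p$-group $Q$ (namely a Sylow $p$-subgroup of an inertial group $I_G(V)$). For a $p$-group $Q$ over a field of characteristic $p$, the algebra $kQ$ is local: the trivial module is its only simple module, and its augmentation ideal is nilpotent. So $kG$ is Morita equivalent to a product of local algebras $kQ_i$. By Corollary~\ref{cor:dist1}, it then suffices to show that $\Thick(\sfD^\mathrm{b}(kQ))$ is distributive for every finite $p$-group $Q$.

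The key step, and the main obstacle, is this distributivity for a single local algebra $kQ$. My first approach is to show that the lattice is in fact a chain, which is trivially distributive. Let $\mathcal{T}$ be a nonzero thick subcategory of $\sfD^\mathrm{b}(kQ)$ and pick a nonzero $X\in\mathcal{T}$. Every complex in $\sfD^\mathrm{b}(kQ)$ has a finite filtration with subquotients shifts of the trivial module $k$, so $\thick(k)=\sfD^\mathrm{b}(kQ)$. One would like to conclude that $k\in\mathcal{T}$ and hence $\mathcal{T}$ is everything.

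That conclusion is false in general. Thick subcategories of $\sfD^\mathrm{b}(kQ)$ are not so simple: for example, $\sfD^\mathrm{perf}(kQ)=\thick(kQ)$ is a proper nonzero one when $Q\neq 1$. So the chain argument fails, and I must instead invoke tensor-triangular geometry. Since $kQ$ is a Hopf algebra, $\sfD^\mathrm{b}(kQ)$ is a tensor triangulated category under $\otimes_k$ with diagonal action, and its unit is $k$. Because $k$ generates the category, every thick subcategory is automatically a tensor ideal. To see this, note that the subcategory $\{Y : Y\otimes X\in\mathcal{T}\}$ is thick and contains $k$, hence it is everything.

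Therefore $\Thick(\sfD^\mathrm{b}(kQ))$ is the lattice of thick tensor ideals. By the classification of Benson--Carlson--Rickard, extended to $\sfD^\mathrm{b}(kQ)$ (equivalently via Balmer's theory), this lattice is isomorphic to the lattice of specialization-closed subsets of $\operatorname{Spec}^h H^*(Q,k)$. The spectrum is a Noetherian space, so support-type subsets suffice. Under this isomorphism joins and meets correspond to unions and intersections, and a lattice of subsets is distributive.

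So the work concentrates on quoting the correct classification for $\sfD^\mathrm{b}$ rather than for $\smodu kQ$. I would verify that every thick tensor ideal of a rigid tensor triangulated category is radical, which holds because $\sfD^\mathrm{b}(kQ)$ is rigid via $k$-duals. By Balmer's theorem, the radical thick tensor ideals always form a frame, and frames are distributive. This last argument needs no explicit classification at all, so it would be my preferred route. The rigidity check I expect to be routine, since every object of $\sfD^\mathrm{b}(kQ)$ is dualizable with dual $\Hom_k(-,k)$.
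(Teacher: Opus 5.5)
Your proposal is correct and follows essentially the same route as the paper: reduce via the block decomposition to local algebras $kQ_i$ with $Q_i$ a $p$-group, note that the unit $k$ generates $\sfD^\mathrm{b}(kQ_i)$ so every thick subcategory is a thick tensor ideal (radical by rigidity), and conclude because these form a coherent frame and hence a distributive lattice. The classification via $\operatorname{Spec}^h H^*(Q,k)$ is, as you say yourself, unnecessary but harmless.
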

\begin{proof}
By the discussion above $kG$ decomposes, up to Morita equivalence, into a product of group algebras $kQ_i$ where each $Q_i$ is a $p$-group. In particular, $kQ_i$ is local and so $\sfD^\mathrm{b}(kQ_i)$ is generated by its trivial module. We know $\sfD^\mathrm{b}(kQ_i)$ is a rigid tt-category and so the lattice of thick tensor ideals is a coherent frame and \emph{a fortiori} distributive. Because the trivial module, which is the unit for the tensor product, is a generator it follows that every thick subcategory is an ideal and so $\Thick(\sfD^\mathrm{b}(kQ_i))$ is distributive. The block decomposition for $kG$ gives
\[
\Thick(\sfD^\mathrm{b}(kG)) = \prod_i \Thick(\sfD^\mathrm{b}(kQ_i))
\]
and so $\Thick(\sfD^\mathrm{b}(kG))$ is distributive as claimed.
\end{proof}

We can now easily prove the following lattice theoretic characterisation of $p$-nilpotence.

\begin{thm}\label{pnilpotent2}
Let $G$ be a finite group and $k$ a field of characteristic $p$. Then the lattice $\Thick(\sfD^\mathrm{b}(kG))$ is distributive if and only if $G$ is $p$-nilpotent.
\end{thm}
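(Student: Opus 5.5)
One direction is Proposition~\ref{pnilpotent1}. For the converse, assume $\Thick(\sfD^\mathrm{b}(kG))$ is distributive. Theorem~\ref{thm:dist1} then says $kG$ is Morita equivalent to a product of local algebras. Since $kG$ is a product of its blocks, this means every block of $kG$ has exactly one isomorphism class of simple modules. The plan is to show that this forces $G$ to be $p$-nilpotent.

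The first step is to look at the principal block $B_0$, the block containing the trivial module $k$. It has a unique simple module, namely $k$, so every composition factor of every module in $B_0$ is trivial. In particular the projective cover $P_k$ of $k$ has only trivial composition factors. Equivalently, every simple $kG$-module other than $k$ lies in a different block, so it has zero Cartan invariant against $k$.

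The second step uses a standard characterisation of $p$-nilpotence: $G$ is $p$-nilpotent if and only if the principal block has a single simple module. I would give the argument directly. Let $N = O^{p}(G)$ be the smallest normal subgroup with $G/N$ a $p$-group. The $kG$-modules with only trivial composition factors are exactly those on which $O^p(G)$ acts trivially: a unipotent action of $G$ factors through a $p$-group quotient, and conversely $k(G/N)$ is local. The module $P_k$ has only trivial composition factors, so $N$ acts trivially on $P_k$. But $P_k$ is a summand of the free module $kG$, and a summand of $kG$ is projective when restricted to $N$. The restriction $\operatorname{Res}_N P_k$ is a projective $kN$-module on which $N$ acts trivially, so it is a sum of copies of $k$. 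Hence the trivial $kN$-module is projective, which means $p \nmid |N|$. Therefore $N$ is a normal $p$-complement, and $G$ is $p$-nilpotent.

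The step I expect to be delicate is this last identification: that "only trivial composition factors" is equivalent to "$O^p(G)$ acts trivially", and that a projective module with trivial action forces $|N|$ to be prime to $p$. Both rest on standard facts. A unipotent representation has image a $p$-group. A $kN$-module on which $N$ acts trivially and which is projective must have $k$ as a projective summand, and this happens exactly when $p\nmid|N|$, by Maschke's theorem in reverse. Alternatively, one could cite the known theorem that the principal block is nilpotent or has one simple module if and only if $G$ is $p$-nilpotent, attributed to Frobenius and Brauer. Combining the two directions gives Theorem~\ref{pnilpotent2}.
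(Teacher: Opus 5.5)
Your proposal is correct and follows the paper's route: distributivity together with Theorem~\ref{thm:dist1} forces the principal block $B_0$ to have a single simple module, and this in turn forces $p$-nilpotence, with Proposition~\ref{pnilpotent1} giving the other direction. The only difference is the last step. The paper cites Navarro (Corollary~6.13) for it, whereas you prove it directly: $O^p(G)$ acts trivially on the projective cover of $k$, so $k$ is a projective $kO^p(G)$-module and hence $p \nmid |O^p(G)|$. That argument is valid and self-contained.
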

\begin{proof}
The proposition we just proved gives one direction. So suppose, on the other hand, that $\Thick(\sfD^\mathrm{b}(kG))$ is distributive. By Corollary~\ref{cor:dist1} it follows that each block of $kG$ is Morita equivalent to a local ring and so, in particular, this is true for the principal block $B_0$. It is well-known (see for instance \cite{Navarro}*{Corollary~6.13}) that $B_0$ is local if and only if $G$ is $p$-nilpotent and so we are done.
\end{proof}

It follows that if $G$ is $p$-nilpotent then the stable category
\[
\smodu kG \cong \sfD^\mathrm{b}(kG)/\sfD^\mathrm{perf}(kG)
\]
also has distributive lattice of thick subcategories. However, the converse is not true: distributivity of $\Thick(\smodu kG)$ is a weaker condition and can hold without $G$ being $p$-nilpotent.

\begin{ex}\label{ex:S3}
Let $k$ be an algebraically closed field of characteristic $3$ and consider the group algebra $kS_3$. This algebra is connected and has two simples and so by Corollary~\ref{cor:dist1} the lattice of thick subcategories of $\sfD^\mathrm{b}(kG)$ is not distributive. This is also visible from Theorem~\ref{pnilpotent2} as $S_3$ is not $3$-nilpotent.

On the other hand $kS_3$ has finite representation type and the indecomposable non-projective modules form a single orbit under taking syzygies. Thus the trivial module $k$ generates $\smodu kS_3$ and so the lattice of thick subcategories is distributive (or, if one prefers, it is distributive for the more pedestrian reason that the only thick subcategories of $\smodu kS_3$ are $0$ and itself).
\end{ex}

\begin{rem}\label{rem:obvious}
For any finite group $G$ we know that $\Thick^\otimes(\sfD^\mathrm{b}(kG))$, the lattice of thick tensor ideals, is distributive (this is in some sense the fundamental theorem of tt-geometry). So the obvious way in which $\Thick(\sfD^\mathrm{b}(kG))$ can be distributive is for every thick subcategory to be an ideal. This happens precisely when $kG$ is local i.e.\ when $G$ is a $p$-group.
\end{rem}



\section{Distributivity for the perfect complexes}

In this section we prove the analogue of Theorem~\ref{thm:dist1} for the category of perfect complexes. We get essentially the same result, \emph{mutatis mutandis}, although we have to work harder for it.

\begin{thm}\label{thm:dist2}
Let $A$ be a finite dimensional $k$-algebra. If $P$ and $Q$ are non-isomorphic indecomposable projectives then distributivity of $\Thick(\sfD^\mathrm{perf}(A))$ forces $\Hom_A(P,Q) = 0$. 
\end{thm}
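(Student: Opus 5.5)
The plan is to argue by contradiction. Suppose $f\colon P\to Q$ is a nonzero map. I would exhibit three thick subcategories of $\sfD^\mathrm{perf}(A)$ that generate a copy of the non-distributive lattice $M_3$ (one bottom, one top, three pairwise incomparable elements). The model to imitate is $\sfD^\mathrm{perf}(kA_2)$. There the thick subcategories are $0$, $\thick(P_1)$, $\thick(P_2)$, $\thick(S)$ and everything, and these form exactly $M_3$. Since $P\not\cong Q$ are indecomposable, $f$ lies in the radical. Put $C=\cone(f)$; this plays the role of the simple $S$. The three candidates are $\thick(P)$, $\thick(Q)$ and $\thick(C)$.

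\textbf{Step 1: the joins.} Using the triangle $P\to Q\to C\to$, any two of $P,Q,C$ generate the third. So all three pairwise joins equal $\thick(P\oplus Q)$.

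\textbf{Step 2: $\thick(P)\cap\thick(Q)=0$.} I would use minimal complexes, since the simples are not available in $\sfD^\mathrm{perf}(A)$. Every perfect complex $X$ is homotopy equivalent to a radical (minimal) complex of projectives. That complex has $P_i$ as a summand in some degree exactly when $\RHom(X,S_i)\neq 0$. If $X\in\thick(P)$, then $\RHom(X,S_i)$ lies in the thick closure of $\RHom(P,S_i)$, and this vanishes unless $P_i\cong P$. Hence the minimal complex of $X$ has only copies of $P$ as terms, and likewise objects of $\thick(Q)$ have only copies of $Q$. An object in both thick subcategories therefore has zero minimal complex, so it is $0$.

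\textbf{Step 3: the distributivity identity.} Granting Step 2, distributivity gives
\[
\thick(P)=\thick(P)\cap\bigl(\thick(Q)\vee\thick(C)\bigr)=\bigl(\thick(P)\cap\thick(Q)\bigr)\vee\bigl(\thick(P)\cap\thick(C)\bigr)=\thick(P)\cap\thick(C).
\]
So $P\in\thick(C)$. By symmetry $Q\in\thick(C)$ as well.

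\textbf{Step 4: the main obstacle.} It remains to show that $P\notin\thick(C)$. Equivalently, some nonzero object of $\thick(C)$ must fail to lie in $\thick(P)$, and it suffices to show $\thick(P)\cap\thick(C)=0$. I would first reduce to the two-vertex idempotent subalgebra. Let $e$ be the sum of the primitive idempotents for $P$ and $Q$. The functor $-\otimes_{eAe}eA$ identifies $\sfD^\mathrm{perf}(eAe)$ with $\thick(P\oplus Q)$ in a way compatible with thick closures, so we may assume $A=eAe$ has just two vertices.

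In that setting I would look for a triangle functor that sends $P$, $Q$ and $C$ to $P_1$, $P_2$ and $S$ in $\sfD^\mathrm{perf}(kA_2)$. If $f\notin J^2$, a natural candidate is base change along a surjection $A\to kA_2$ that kills $J^2$, the loops, and every arrow other than (the image of) $f$.

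Any object of $\thick(P)\cap\thick(C)$ is a complex built from copies of $P$ alone. It should then map to an object of $\thick(P_1)\cap\thick(S)=0$. Base change preserves minimal complexes built from projectives, so only zero complexes map to zero, which would show the object is $0$. If $f\in J^2$, I would first replace $f$ by a suitable nonzero map in $J\setminus J^2$, since some arrow between the two vertices must exist. Alternatively, I would run the minimal-complex bookkeeping directly: compare how many times $P$ and $Q$ occur in the terms of iterated cones and summands of objects built from $C$.

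This last step, ruling out $P\in\thick(C)$ once $f\neq 0$, is where I expect the real work to lie.
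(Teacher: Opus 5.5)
Your argument is correct, but its decisive step is done differently from the paper. Both proofs exhibit an $M_3$ on $\thick(P)$, $\thick(Q)$, $\thick(C)$, with $f$ arranged to be an arrow.

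The paper meets with $\thick(C)$, so it must show $\thick(C)\cap\thick(P)=0=\thick(C)\cap\thick(Q)$. You meet with $\thick(P)$, so one of your two vanishings is the easy $\thick(P)\cap\thick(Q)=0$.

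For the hard vanishing the paper stays inside $\sfD^\mathrm{perf}(A)$. It takes the length-two module $M$ (top $S_Q$, socle $S_P$, $f$ acting nontrivially) and checks $\RHom(C,M)=0$. From this it deduces $\RHom(-,S_Q)\cong\Sigma\RHom(-,S_P)$ on $\thick(C)$. Since objects of $\thick(C)$ are detected by $\RHom(-,S_P\oplus S_Q)$, anything in $\thick(C)$ killed by either functor is zero, which disposes of both intersections at once.

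You instead pass to $eAe$, map onto $kA_2$, and pull back $\thick(P_1)\cap\thick(S)=0$ along the conservative functor $-\otimes^{\mathrm{L}}_{eAe}kA_2$. These are two faces of one idea. The paper's $M$ is exactly the inflation of the two-dimensional projective-injective $kA_2$-module (the image of $Q$). By adjunction, $\RHom_A(C,M)=0$ is the vanishing of $\RHom_{kA_2}(S,-)$ on that module.

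The paper's version is shorter: it needs no idempotent reduction, no construction of the quotient map, and no knowledge of $\Thick(\sfD^\mathrm{perf}(kA_2))$. Yours isolates a reusable principle (a conservative exact functor reflects $\thick(X)\cap\thick(Y)=0$) and shows where the $M_3$ comes from.

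A few small points for a write-up:
\begin{itemize}
\item Choose $f$ to be an arrow of the quiver of $eAe$ at the outset; Steps 1--3 use nothing about $f$. Such an arrow exists: with $R=\rad(eAe)$, the inclusion $e_qRe_p\subseteq R^2$ would give $e_qRe_p\subseteq R\,e_qRe_p+e_qRe_p\,R$, hence $e_qRe_p=0$ by nilpotence.
\item Conservativity is cleanest via $\RHom_{eAe}(X,S_i)\cong\RHom_{kA_2}(X\otimes^{\mathrm{L}}_{eAe}kA_2,S_i)$, since both simples are inflated from $kA_2$.
\item The ``equivalently'' in Step~4 is backwards: $P\notin\thick(C)$ means some object of $\thick(P)$ is not in $\thick(C)$. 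The sufficient condition you actually prove, $\thick(P)\cap\thick(C)=0$, is nonetheless the right one.
\end{itemize}
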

\begin{proof}
Suppose, with a view toward a contradiction, there exists a non-zero $f\colon P\to Q$. Without loss of generality we may assume that $f$ lies in $J\mysetminus J^2$ by replacing $Q$ with another projective if necessary and using that $J\mysetminus J^2$ generates the radical of $A$ (we could even assume it is an arrow of the quiver). We let $C = \cone(f)$.

By definition $C\in \thick(P\oplus Q) = \thick(P) \vee \thick(Q)$ and so we learn from distributivity that
\begin{align*}
\thick(C) &= \thick(C) \cap (\thick(P) \vee \thick(Q)) \\
&= (\thick(C) \cap \thick(P)) \vee (\thick(C) \cap \thick(Q)).
\end{align*} 
We want to show this is a problem for the supposed existence of $f$. 

Let us set $S_P = P/\rad(P)$ and $S_Q = Q/\rad(Q)$. The element $f$ corresponds to an extension
\[
0 \to S_P \to M \to S_Q \to 0
\]
where $M$ is of length $2$ with a non-trivial action of $f$. We thus have $\RHom(C, M) = 0$ and so $\thick(C) \subseteq {}^\perp M$. With this in mind, if $X\in \thick(C)$ then applying $\RHom(X,-)$ to the triangle corresponding to the above extension gives a triangle
\[
\RHom(X, S_P) \to 0 \to \RHom(X, S_Q) \to \Sigma \RHom(X, S_P)
\]
which tells us that $\RHom(-, S_Q)$ and $\Sigma \RHom(-, S_P)$ are isomorphic functors on $\thick(C)$. Any object of $\thick(C)$ can be represented by a minimal complex of projectives taken from $\add(P\oplus Q)$ and so for $X\in \thick(C)$ we have $X\cong 0$ if and only if
\begin{align*}
0 &= \RHom(X, S_P\oplus S_Q) \\
&\cong (\RHom(X, S_P) \oplus \RHom(X, S_Q)) \\
&\cong (\RHom(X, S_P) \oplus \Sigma \RHom(X, S_P)) \\
&\cong (\Sigma^{-1}\RHom(X, S_Q) \oplus \RHom(X, S_Q)).
\end{align*}
The problem is we now see that if $X\in \thick(C) \cap \thick(P)$ then $\RHom(X, S_Q)=0$ and if $X\in \thick(C) \cap \thick(Q)$ then $\RHom(X, S_P)=0$ and either way $X=0$. So these intersections are both trivial and so distributivity would tell us that
\[
\thick(C) = (\thick(C) \cap \thick(P)) \vee (\thick(C) \cap \thick(Q)) = 0
\]
which is not true. Thus the lattice of thick subcategories cannot be distributive unless there are no arrows between distinct vertices.
\end{proof}

\begin{cor}\label{cor:dist2}
If $A$ is a finite dimensional $k$-algebra then $\Thick(\sfD^\mathrm{perf}(A))$ is distributive if and only if $A$ is Morita equivalent to a product of local algebras $A_i$ such that each $\Thick(\sfD^\mathrm{perf}(A_i))$ is distributive.
\end{cor}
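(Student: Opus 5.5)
The forward direction follows from Theorem~\ref{thm:dist2}; the converse is the same block-product argument as in Corollary~\ref{cor:dist1}.

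\textbf{Forward direction.} Distributivity is Morita invariant, since a Morita equivalence induces an equivalence of perfect complexes and hence an isomorphism of lattices of thick subcategories. So I will assume $A$ is basic, with pairwise non-isomorphic indecomposable projectives $P_1,\ldots,P_n$. Suppose $\Thick(\sfD^\mathrm{perf}(A))$ is distributive. Theorem~\ref{thm:dist2} then gives $\Hom_A(P_i,P_j)=0$ for $i\neq j$. Exactly as in the proof of Theorem~\ref{thm:dist1},
\[
A \cong \operatorname{End}_A\Big(\bigoplus_i P_i\Big) = \bigoplus_{i,j}\Hom_A(P_i,P_j) = \prod_i \operatorname{End}_A(P_i),
\]
and each $A_i=\operatorname{End}_A(P_i)$ is local because $P_i$ is indecomposable. (Whether one writes $A$ or $A^{\op}$ depends on conventions, and is harmless: a product of locals is preserved.)

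\textbf{Distributivity of each factor.} The product decomposition of rings gives $\sfD^\mathrm{perf}(A)\simeq\prod_i \sfD^\mathrm{perf}(A_i)$. Under this equivalence $\sfD^\mathrm{perf}(A_i)$ is identified with $\thick(P_i)$. Thick subcategories of $\thick(P_i)$ are precisely the thick subcategories of $\sfD^\mathrm{perf}(A)$ contained in $\thick(P_i)$. Hence $\Thick(\sfD^\mathrm{perf}(A_i))$ is the interval $[0,\thick(P_i)]$ in $\Thick(\sfD^\mathrm{perf}(A))$. Meets and joins computed inside $\thick(P_i)$ agree with those computed in the ambient lattice, so this interval is a sublattice and inherits distributivity.

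\textbf{Converse.} Suppose $A$ is Morita equivalent to $\prod_i A_i$. I will show that every thick subcategory $\mathsf{U}$ of $\prod_i\sfD^\mathrm{perf}(A_i)$ splits as $\prod_i \mathsf{U}_i$. Any object is a finite direct sum of its components, one in each factor. Thick subcategories are closed under summands, so each component of an object of $\mathsf{U}$ lies in $\mathsf{U}$; taking $\mathsf{U}_i$ to be the objects of $\mathsf{U}$ concentrated in factor $i$ gives the splitting. This yields
\[
\Thick(\sfD^\mathrm{perf}(A))\cong\prod_i\Thick(\sfD^\mathrm{perf}(A_i)),
\]
and a product of distributive lattices is distributive.

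The only substantive input is Theorem~\ref{thm:dist2}; the remaining steps are formal.
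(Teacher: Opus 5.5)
Your proposal is correct and takes the same approach as the paper. Theorem~\ref{thm:dist2} kills the Hom-spaces between distinct indecomposable projectives, so the basic algebra splits as $\prod_i \operatorname{End}_A(P_i)$; each factor inherits distributivity via $\thick(P_i)\simeq\sfD^\mathrm{perf}(A_i)$, and the converse is the product decomposition of the lattice, where your componentwise splitting of thick subcategories via closure under summands spells out a step the paper leaves implicit.
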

\begin{proof}
Suppose first that $\Thick(\sfD^\mathrm{perf}(A))$ is distributive. By the theorem if $P$ and $Q$ are non-isomorphic indecomposable projectives then $\Hom(P,Q) = 0 = \Hom(Q,P)$. Picking a representative set of indecomposable projectives $P_i$ we thus obtain a Morita equivalent basic algebra of the form $\prod_i \Hom(P_i,P_i)$. In $\sfD^\mathrm{perf}(A)$ we have $\thick(P_i) \cong \sfD^\mathrm{perf}(A_i)$, where $A_i = \Hom(P_i,P_i)$, and so distributivity of $\Thick(\sfD^\mathrm{perf}(A))$ implies distributivity of each $\Thick(\sfD^\mathrm{perf}(A_i))$.

On the other hand, if $A$ is Morita equivalent to a product of local rings $A_i$ such that each $\Thick(\sfD^\mathrm{perf}(A_i))$ is distributive then
\[
\Thick(\sfD^\mathrm{perf}(A)) = \Thick\left(\prod_i \sfD^\mathrm{perf}(A_i)\right) = \prod_i \Thick(\sfD^\mathrm{perf}(A_i))
\]
is distributive.
\end{proof}

This lets us add another characterisation of $p$-nilpotence to the collective list.

\begin{cor}\label{pnilpotent3}
Let $G$ be a finite group and $k$ an algebraically closed field of characteristic $p$. The following are equivalent:
\begin{itemize}
\item[(1)] $G$ is $p$-nilpotent;
\item[(2)] $\Thick(\sfD^\mathrm{b}(kG))$ is a distributive lattice;
\item[(3)] $\Thick(\sfD^\mathrm{perf}(kG))$ is a distributive lattice.
\end{itemize}
\end{cor}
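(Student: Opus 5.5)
The equivalence of (1) and (2) is exactly Theorem~\ref{pnilpotent2}, so the remaining work is to bring (3) into the picture. I will prove (1)$\Rightarrow$(3) and (3)$\Rightarrow$(1), running each argument in parallel with the bounded derived category case but using Corollary~\ref{cor:dist2} in place of Corollary~\ref{cor:dist1}.

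For (1)$\Rightarrow$(3), I use the block analysis preceding Proposition~\ref{pnilpotent1}. When $G$ is $p$-nilpotent, $kG$ is Morita equivalent to a product of group algebras $kQ_i$ with each $Q_i$ a $p$-group, so
\[
\Thick(\sfD^\mathrm{perf}(kG)) = \prod_i \Thick(\sfD^\mathrm{perf}(kQ_i)).
\]
It therefore suffices to check distributivity for a single $p$-group $Q$. Here $\sfD^\mathrm{perf}(kQ)$ is a rigid tensor triangulated category under the diagonal tensor product, and it is a thick tensor ideal of $\sfD^\mathrm{b}(kQ)$. I see two routes from here, and I would take the first.
\begin{itemize}
\item Every thick subcategory of $\sfD^\mathrm{perf}(kQ)$ is a thick subcategory of $\sfD^\mathrm{b}(kQ)$, and meets agree since both are intersections. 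Joins agree as well: the thick closure of a union inside the thick subcategory $\sfD^\mathrm{perf}(kQ)$ is the same as its thick closure in $\sfD^\mathrm{b}(kQ)$. So $\Thick(\sfD^\mathrm{perf}(kQ))$ is a sublattice of $\Thick(\sfD^\mathrm{b}(kQ))$, which is distributive by Proposition~\ref{pnilpotent1}, and sublattices of distributive lattices are distributive.
\item Alternatively, one can argue directly. Thick subcategories of $\sfD^\mathrm{perf}(kQ)$ are just $0$ and the whole category, because $kQ$ is local, every nonzero perfect complex generates $kQ$ (via the top of its minimal complex), and $kQ$ generates everything. A two-element lattice is trivially distributive.
\end{itemize}
The same sublattice observation in fact gives (2)$\Rightarrow$(3) directly for any $G$.

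For (3)$\Rightarrow$(1), suppose $\Thick(\sfD^\mathrm{perf}(kG))$ is distributive. By Corollary~\ref{cor:dist2}, $kG$ is Morita equivalent to a product of local algebras. Hence every block of $kG$ is Morita equivalent to a local algebra, since a block is connected and so must coincide with one of the local factors. In particular, the principal block $B_0$ has a unique simple module, so it is Morita equivalent to a local algebra; since a basic algebra is determined up to Morita equivalence, this is what the cited criterion needs. The criterion quoted in the proof of Theorem~\ref{pnilpotent2} (\cite{Navarro}*{Corollary~6.13}) then shows that $G$ is $p$-nilpotent.

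I expect no real obstacle. The only point needing care is the sublattice claim in (1)$\Rightarrow$(3), specifically that joins computed in $\sfD^\mathrm{perf}$ agree with joins computed in $\sfD^\mathrm{b}$; this holds because $\sfD^\mathrm{perf}$ is itself thick.
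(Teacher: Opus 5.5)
Your argument is correct and is essentially the paper's proof. (1)$\Leftrightarrow$(2) is Theorem~\ref{pnilpotent2}; (2)$\Rightarrow$(3) is the observation that $\Thick(\sfD^\mathrm{perf}(kG))$ is the interval below $\sfD^\mathrm{perf}(kG)$ in $\Thick(\sfD^\mathrm{b}(kG))$, which is your first bullet; and (3)$\Rightarrow$(1) uses Corollary~\ref{cor:dist2}, locality of the principal block, and Navarro's criterion, as the paper does.

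The one flaw is in the second bullet, which you do not use. Locality plus minimal complexes does not by itself show that every nonzero perfect complex generates; otherwise the paper's closing question about $\Thick(\sfD^\mathrm{perf}(A))$ for local $A$ would be trivial. For $kQ$ the claim is true for a different reason: $\thick(X)$ is a tensor ideal of $\sfD^\mathrm{b}(kQ)$, and $X\otimes_k kQ$ is a nonzero complex of free modules.
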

\begin{proof}
We have already shown in Theorem~\ref{pnilpotent2} that (1) and (2) are equivalent. It is clear that (2) implies (3). If $\Thick(\sfD^\mathrm{perf}(kG))$ is distributive then Corollary~\ref{cor:dist2} tells us that the principal block is local and hence, as in the argument for Theorem~\ref{pnilpotent2}, the group $G$ is $p$-nilpotent.
\end{proof}



\section{Aside I: Regularity}

It's also interesting to look at the structure of individual thick subcategories and how this reflects or controls properties of the ambient category. For example, the following result yields another (well-known) characterisation of $p$-nilpotent groups.

Let $\sfT$ be a triangulated category. We say $\sfT$ is \emph{regular} if it is strongly generated, i.e.\ we have $\sfT = \thick_n(G)$ for some $G\in \sfT$ and $n\geq 0$.

\begin{thm}\label{thm:regularsimple}
Let $A$ be a finite dimensional connected symmetric algebra over $k$ and $S$ a simple $A$-module such that $\Ext^*(S,S)$ is a right coherent graded ring and $\Ext^*(S,M)$ is a finitely presented graded right $\Ext^*(S,S)$-module for every $M\in \modu A$. Then $\thick(S)$ is regular if and only if $A$ is local.
\end{thm}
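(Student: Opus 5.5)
If $A$ is local then $S$ is the unique simple module. Every finite dimensional module then has a finite composition series with all factors isomorphic to $S$, so $\thick(S) = \sfD^\mathrm{b}(A)$. For a finite dimensional algebra, $\sfD^\mathrm{b}(A)$ is strongly generated: by Rouquier (or Krause--Kussin), $\sfD^\mathrm{b}(A) = \thick_n(A/J)$ for $n$ at most the Loewy length. Here $A/J = S$, so $\thick(S) = \thick_n(S)$ and $\thick(S)$ is regular. This direction needs none of the hypotheses beyond $A$ being finite dimensional.

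For the converse, suppose $\thick(S) = \thick_n(G)$ for some $G$ and some $n$. I would first replace $G$ by $S$ itself. Since $G \in \thick(S) = \bigcup_m \thick_m(S)$, we have $G \in \thick_m(S)$ for some $m$, and hence $\thick(S) = \thick_{nm}(S)$. The plan is then to pass to the Koszul-dual side via $E = \Ext^*(S,S)$ and the functor $F = \RHom(S,-)$. This functor sends $\thick(S)$ to finitely built objects over $E$: $F(S)=E$, and by the coherence and finite presentation hypotheses, $\Ext^*(S,M)$ is a finitely presented graded $E$-module for every $M$. A level bound $\thick_{N}(S)$ yields a uniform bound, the ghost lemma with respect to $S$, so every $S$-ghost composite of length $N$ vanishes on $\thick(S)$.

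The key step is to produce, when $A$ is not local, objects of $\thick(S)$ that violate this bound. Since $A$ is connected and not local, there is another simple module $T$ with $\Ext^1$ between the blocks of simples nonzero. Because $A$ is symmetric, hence self-injective, $\Omega^i S$ is never projective. I would show that $E$ is then not of finite global dimension in the graded coherent sense: $S$ is not in $\thick$ of perfect complexes. The symmetric hypothesis matters here. For a symmetric connected algebra with at least two simples, the Ext algebra $\Ext^*(S,S)$ is infinite dimensional (its complexity is positive, because non-projective modules over a self-injective algebra have infinite projective resolutions). Moreover, the modules $\Omega^i S$ have $\Ext^*(S,\Omega^iS)$ requiring ever longer resolutions over $E$: their graded $E$-module resolutions have length growing with $i$. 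This forces the level of $\Omega^i S$ or of suitable truncations $\tau_{\le i}$ of the projective resolution of $S$, viewed inside $\thick(S)$, to be unbounded. I would apply the coherence hypothesis to get a finitely presented $E$-module of projective dimension at least $i$ for each $i$, and use the standard fact that levels in $\thick(S)$ are bounded below by the projective dimension (length of minimal free resolution) of $F(-)$ over $E$.

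The main obstacle is precisely this last step: showing that $E$ has infinite global dimension among finitely presented modules exactly when $A$ is not local. In the local case $E$ can also have infinite global dimension, yet regularity still holds, so the argument must use the other simple $T$ in an essential way. My first attempt would be to take $M$ an indecomposable module with top $T$ and socle $S$, which exists by connectedness and the symmetric property, and show that $\Ext^*(S,M)$ is a finitely presented $E$-module that is not perfect over $E$. Since $M \notin \thick(S)$ when $A$ is not local, I expect a contradiction to come from comparing $F$ applied to approximations of $M$ by $\thick_{N}(S)$.
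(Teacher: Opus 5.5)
Your argument for ``$A$ local $\Rightarrow$ $\thick(S)$ regular'' is fine and matches the paper. The converse has a genuine gap, because the mechanism you propose cannot work. The ``standard fact'' that the level of $X$ in $\thick(S)$ is bounded below by the projective dimension of $\Ext^*(S,X)$ over $E$ is false; what holds in general is an upper bound of the form level $\le \mathrm{pd}+1$. For instance, over $A=k[x]/(x^2)$ the object $\cone(x\colon A\to A)$ lies in $\thick_2(A)$, yet its cohomology (two copies of $k$) has infinite projective dimension over $A$. The same failure occurs inside the setting of the theorem. For $kC_3$ in characteristic $3$, the uniserial module $M_2$ of length $2$ lies in $\thick_2(k)$, but $\Ext^*(k,M_2)$ has a periodic, hence infinite, minimal resolution over $E=\Ext^*(k,k)\cong\Lambda(a)\otimes k[t]$.

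Your test objects also do not lie in $\thick(S)$. Every object of $\thick(S)$ has cohomology whose composition factors are all isomorphic to $S$. But if $A$ is symmetric, connected and non-local, then $P_S$ must have a composition factor $T\not\cong S$: otherwise $\Hom(P_T,P_S)=0$ for all $T\not\cong S$, hence by symmetry also $\Hom(P_S,P_T)=0$, and $P_S$ would split off as a block. So already $\Omega S=\rad P_S\notin\thick(S)$. Finally, your last paragraph is a hope rather than an argument, as you acknowledge. Nothing in the sketch uses symmetry or connectedness in a way that could produce a contradiction.

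The missing idea is to exploit regularity through representability rather than level estimates. The coherence and finite presentation hypotheses are exactly what make $\Hom(-,M)$, restricted to $\thick(S)$, locally finitely presented for every $M\in\sfD^\mathrm{b}(A)$. Since $\thick(S)$ is strongly generated, Rouquier's representability theorem then gives a right adjoint to $\thick(S)\hookrightarrow\sfD^\mathrm{b}(A)$. This adjoint is the precise version of your ``approximations of $M$ by $\thick_N(S)$''. It yields a semiorthogonal decomposition of $\sfD^\mathrm{b}(A)$ whose other piece is $\sfD^\mathrm{b}(eAe)$, with $e=1-f$.

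Dualizing into $\sfD^\mathrm{perf}(k)$ gives a semiorthogonal decomposition of $\sfD^\mathrm{perf}(A)$ into $\sfD^\mathrm{perf}(\mathrm{L}_fA)$ and $\sfD^\mathrm{perf}(eAe)$. Because $A$ is symmetric, $\sfD^\mathrm{perf}(A)$ is $0$-Calabi--Yau, so this decomposition is orthogonal. The localization triangle for $A$ therefore splits, forcing $\mathrm{L}_fA\simeq A/AeA$ and $A\cong AeA\times A/AeA$. Connectedness then gives $e=0$, i.e.\ $A$ is local. This is where symmetry and connectedness genuinely enter.
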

\begin{proof}
Suppose first that $\thick(S)$ is regular. Let $f$ be the idempotent corresponding to $S$ and set $e = 1-f$. Consider the localization sequence
\[
\thick(S) \to \sfD^\mathrm{b}(A) \to \sfD^\mathrm{b}(eAe)
\]
where $\thick(S)$ is identified with $\sfD_{\modu A/AeA}^\mathrm{b}(A)$ the subcategory of complexes with cohomology in $\modu A/AeA$. The coherence hypothesis puts us in the situation of \cite{GreenleesS}*{Proposition~4.7} and ensures that for $M\in \sfD^\mathrm{b}(A)$ the cohomological functor $\Hom(-,M)$ on $\thick(S)^\op$ is locally finitely presented and hence representable on $\thick(S)$ by \cite{Rouquier}*{Theorem~4.16} using regularity of $\thick(S)$. Thus the functors in the above localization sequence admit right adjoints, i.e.\ we have a (left) Bousfield localization or a semiorthogonal decomposition depending on the reader's terminological preferences.

We can dualize this sequence by taking (enhanced) functors to $\sfD^\mathrm{perf}(k)$ (so we view all categories involved as stable $\infty$-categories and take the functor category in that setting) to get a semiorthogonal decomposition
\[
\begin{tikzcd}
\thick(S)^\vee \arrow[r, hook, shift right=2]  & \sfD^\mathrm{perf}(A) \arrow[r, shift right=2] \arrow[l, shift right=2]  & \sfD^\mathrm{perf}(eAe) \arrow[l, hook', shift right=2]
\end{tikzcd}
\]
of $\sfD^\mathrm{perf}(A)$. Here 
\[
\thick(S)^\vee = \Fun^\mathrm{ex}(\thick(S), \sfD^\mathrm{perf}(k)) \cong \sfD^\mathrm{perf}(\mathrm{L}_fA)
\]
where $\mathrm{L}_fA$ is the derived localization of $A$ at $f$ which is a connective dg algebra with $\mathrm{H}^0(\mathrm{L}_fA) = A/AeA$. By the assumption that $A$ is symmetric the category $\sfD^\mathrm{perf}(A)$ is $0$-Calabi-Yau and so this semiorthogonal decomposition is actually orthogonal. Thus the localization triangle for $A$
\[
\Hom(eA, A) \otimes^\mathrm{L}_{eAe} eA \to A \to \mathrm{L}_fA \to
\]
is split: so $\mathrm{L}_fA$ is concentrated in degree $0$ and is just isomorphic to $A/AeA$ and ditto for $\Hom(eA, A) \otimes^\mathrm{L}_{eAe} eA$ which is necessarily then just $AeA$.  From this we learn, by looking at the endomorphism ring of the right module $A$ and using the splitting and orthogonality relations, that $A \cong AeA \times A/AeA$. We assumed that $A$ is connected so the only way out is if $e=0$ i.e.\ $A$ is local.

The converse, namely that if $A$ is local then $\thick(S) = \sfD^\mathrm{b}(A)$ is regular, is well-known and follows easily by using the radical filtration of a bounded complex of finite dimensional $A$-modules.
\end{proof}

\begin{ex}
This is not true without the symmetry hypothesis as the following example shows. Consider a $2$-cycle modulo the radical squared, that is take the quiver 
\[
\begin{tikzcd}
1 \arrow[r, "\alpha", bend left] & 2 \arrow[l, "\beta", bend left]
\end{tikzcd} 
\]
and form the path algebra modulo the relations $\alpha\beta$ and $\beta\alpha$. Then one can compute that $\thick(S_1) \cong \sfD^\mathrm{perf}(k[\theta])$ where $\theta$ has degree $2$ and $k[\theta]$ is formal. This category is regular (and even homotopically finitely presented).
\end{ex}

\begin{cor}\label{cor:pnilpotent}
If $G$ is a finite group and $k$ is a field of characteristic $p$ then $\thick(k)$, the thick subcategory generated by the trivial module, is regular if and only if $G$ is $p$-nilpotent. Equivalently, the ring spectrum $C^*(BG; k)$ of cochains on $BG$ is regular if and only if $G$ is $p$-nilpotent.
\end{cor}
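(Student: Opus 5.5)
We deduce Corollary~\ref{cor:pnilpotent} from Theorem~\ref{thm:regularsimple} applied to the principal block. The first step is to reduce to the principal block $B_0$ of $kG$. The trivial module $k$ is a simple $B_0$-module, and $\sfD^\mathrm{b}(kG)$ splits as a product over blocks, so $\thick(k)$ computed in $\sfD^\mathrm{b}(kG)$ agrees with $\thick(k)$ computed in $\sfD^\mathrm{b}(B_0)$. Next we check that $B_0$ satisfies the hypotheses of the theorem. It is connected, being a block. It is symmetric, since $kG$ is symmetric and blocks of symmetric algebras are symmetric. The coherence hypotheses come from the Evens--Venkov theorem: $\Ext^*_{kG}(k,k)=H^*(G,k)$ is a finitely generated graded-commutative $k$-algebra, hence noetherian and in particular coherent. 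Moreover $\Ext^*_{kG}(k,M)=H^*(G,M)$ is a finitely generated $H^*(G,k)$-module for every finitely generated $M$, hence finitely presented. We may extend scalars to an algebraic closure if necessary; neither regularity nor $p$-nilpotence is affected by this, and one should check that $\thick(k)$ behaves well under the base change.

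With the hypotheses verified, Theorem~\ref{thm:regularsimple} tells us that $\thick(k)$ is regular if and only if $B_0$ is local. As in the proof of Theorem~\ref{pnilpotent2}, \cite{Navarro}*{Corollary~6.13} says $B_0$ is local if and only if $G$ is $p$-nilpotent. The converse direction can also be seen directly: if $G$ is $p$-nilpotent then $B_0$ is Morita equivalent to $kP$ for $P$ a Sylow $p$-subgroup (from the block discussion preceding Proposition~\ref{pnilpotent1}, the inertial group of the trivial $kN$-module being all of $G$), so $\thick(k)=\sfD^\mathrm{b}(B_0)$ is regular by the radical filtration.

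For the restatement, we use the standard equivalence $\thick(k)\simeq \sfD^\mathrm{perf}(C^*(BG;k))^{\op}$, or an equivalence up to the appropriate duality. It is given by $X\mapsto \RHom(X,k)$, or equivalently by sending $k$ to the module $\RHom_{kG}(k,k)\simeq C^*(BG;k)$; fully faithfulness on $k$ is checked by comparing endomorphisms, and this extends to the thick closure. Regularity of a ring spectrum $R$ means that $\sfD^\mathrm{perf}(R)$ is strongly generated. Strong generation is invariant under passage to the opposite category, since $\thick_n$ is self-dual. Hence $\thick(k)$ is regular exactly when $C^*(BG;k)$ is.

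The main obstacle is verifying the coherence hypotheses in the form Theorem~\ref{thm:regularsimple} requires, namely as right modules over the graded ring, with $M$ ranging over $\modu B_0$. Evens--Venkov handles $kG$-modules, and restricting to $B_0$-modules is harmless. The rest is bookkeeping with citations.
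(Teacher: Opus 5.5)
Your proposal is correct and follows the paper's argument. You apply Theorem~\ref{thm:regularsimple} to the connected symmetric principal block, with the coherence hypotheses supplied by Evens--Venkov, which the paper leaves implicit. You then use \cite{Navarro}*{Corollary~6.13} to identify locality of $B_0$ with $p$-nilpotence.

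You give more detail than the paper on the block reduction, and you justify the $C^*(BG;k)$ restatement via $\thick(k)^{\op}\simeq\sfD^\mathrm{perf}(C^*(BG;k))$, which the paper simply asserts. The paper also glosses over your base-change caveat, since it tacitly works over an algebraically closed field throughout.
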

\begin{proof}
The group algebra $kG$ is symmetric so we see that regularity of $\thick(k)$ implies that the principal block of $kG$ is local. This is equivalent to $p$-nilpotence of $G$ (as used earlier, see \cite{Navarro}*{Corollary~6.13}).
\end{proof}

\begin{rem}
The corollary is, at least in some sense, well-known and there are different proofs. See for instance \cite{CCS}*{Theorem~4.1} for a homotopy theoretic perspective.
\end{rem}



\section{Aside II: Commutative rings}

We mentioned the result of Ore that the subgroups of a finite group form a distributive lattice precisely for the cyclic groups. The analogous condition has also been studied in commutative algebra: a commutative ring is called \emph{arithmetical} if its lattice of ideals is distributive. These rings seem to have been introduced by Fuchs \cite{Fuchs} in 1949. In the case of domains these are precisely the Pr\"ufer domains and there are myriad characterizations of such rings.

So, as we were motivated by Ore, one should also be motivated by Fuchs. Let $R$ be a commutative ring. The case of the perfect complexes is interesting, but does not characterise any special class of rings. The following theorem is well-known to a certain crowd.

\begin{thm}
The lattice $\Thick(\sfD^\mathrm{perf}(R))$ is distributive.
\end{thm}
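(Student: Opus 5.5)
The approach is the same as in Proposition~\ref{pnilpotent1}: show that every thick subcategory of $\sfD^\mathrm{perf}(R)$ is automatically a tensor ideal, and then invoke the distributivity of the lattice of thick tensor ideals in a rigid tt-category.

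\textbf{Step 1: $\sfD^\mathrm{perf}(R)$ is a rigid tt-category generated by its unit.} The derived tensor product $\otimes^\mathrm{L}_R$ makes $\sfD^\mathrm{perf}(R)$ a rigid tensor triangulated category with unit $R$. Since $R$ is commutative, this tensor product is symmetric. Moreover $\sfD^\mathrm{perf}(R) = \thick(R)$, because perfect complexes are by definition built from $R$ in finitely many steps using cones, shifts and summands.

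\textbf{Step 2: every thick subcategory is an ideal.} Let $\mathsf{C}$ be a thick subcategory and fix $X \in \mathsf{C}$. Consider
\[
\{ Y \in \sfD^\mathrm{perf}(R) \mid Y \otimes^\mathrm{L}_R X \in \mathsf{C} \}.
\]
This collection is a thick subcategory, since $-\otimes^\mathrm{L}_R X$ is exact and preserves summands. It contains $R$, because $R \otimes^\mathrm{L}_R X \cong X$. Hence it contains $\thick(R)$, which is everything. So $\mathsf{C}$ is a tensor ideal, and therefore
\[
\Thick(\sfD^\mathrm{perf}(R)) = \Thick^\otimes(\sfD^\mathrm{perf}(R)).
\]

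\textbf{Step 3: distributivity of the lattice of thick tensor ideals.} Rigidity makes every thick tensor ideal radical: $X$ is a summand of $X \otimes X^\vee \otimes X$. One then checks the frame-type law
\[
\mathsf{I} \cap \bigvee_j \mathsf{J}_j = \bigvee_j (\mathsf{I} \cap \mathsf{J}_j).
\]
The key input is that, for radical ideals, $\mathsf{I} \cap \mathsf{J}$ equals the radical of the ideal generated by all $a \otimes b$ with $a \in \mathsf{I}$ and $b \in \mathsf{J}$. The argument runs as follows.
\begin{itemize}
\item Take $X \in \mathsf{I} \cap \bigvee_j \mathsf{J}_j$.
\item The subcategory $\{ Y \mid X \otimes Y \in \bigvee_j (\mathsf{I} \cap \mathsf{J}_j) \}$ is thick and contains every $\mathsf{J}_j$, hence contains $X$.
\item So $X \otimes X$ lies in $\bigvee_j (\mathsf{I} \cap \mathsf{J}_j)$, and radicality gives $X \in \bigvee_j (\mathsf{I} \cap \mathsf{J}_j)$.
\end{itemize}
This is the standard fact, already used in Proposition~\ref{pnilpotent1}, that thick tensor ideals of a rigid tt-category form a coherent frame. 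Alternatively one can cite Thomason's classification, which identifies this lattice with the Thomason subsets of $\operatorname{Spec} R$, a lattice of subsets under union and intersection.

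\textbf{Main obstacle.} The only real content is the radicality and tensor-ideal argument in Step 3. Everything else is formal, so I expect no serious obstacle. The one point to check carefully is that the join of ideals really is computed as the thick closure of the union.
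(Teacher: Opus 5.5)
Your proposal is correct and takes the same approach as the paper. The paper observes that $\sfD^\mathrm{perf}(R)=\thick(R)$ with $R$ the tensor unit, so every thick subcategory is a radical tensor ideal, and distributivity follows from that of radical tensor ideals (Remark~\ref{rem:obvious}); your Steps~2 and~3 simply write out the ideal and distributivity arguments that the paper leaves to a reference.
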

\begin{proof}
We are essentially in the situation of Remark~\ref{rem:obvious}. There is a symmetric monoidal structure compatible with the triangulated structure on $\sfD^\mathrm{perf}(R)$ with unit object $R$ and $\sfD^\mathrm{perf}(R) = \thick(R)$. It follows that the lattice of thick subcategories coincides with the lattice of radical tensor ideals and hence is distributive. (See \cite{SiraGreg} for further details and references.)
\end{proof}

Distributivity for thick subcategories of the bounded derived category on the other hand is not at all well understood (see the discussion at the start of the next section) but we know it can fail as in Example~\ref{ex:nondist}.



\section{???}

In view of Corollaries \ref{cor:dist1} and \ref{cor:dist2} it seems interesting to give a characterisation of those local finite dimensional algebras $A$ such that the perfect or bounded derived category of $A$ has distributive lattice of thick subcategories. 

\begin{qn}
Let $A$ be a local finite dimensional $k$-algebra.
\begin{itemize}
\item When is $\Thick(\sfD^\mathrm{perf}(A))$ distributive?
\item When is $\Thick(\sfD^\mathrm{b}(A))$ distributive?
\end{itemize}
\end{qn}

\begin{rem}
The results of \cite{GregDb} show that if $A$ is a local artinian commutative complete intersection ring then $\Thick(\sfD^\mathrm{b}(A))$ is distributive. In a similar vein it is true for cocommutative local Hopf algebras (e.g.\ group algebras of $p$-groups in characteristic $p$).
\end{rem}

It is clear that distributivity of $\Thick(\sfD^\mathrm{b}(A))$ implies distributivity for $\Thick(\sfD^\mathrm{perf}(A))$. We saw in Corollary~\ref{pnilpotent3} that when $A$ is a group algebra the converse also holds. This is not true in general.

\begin{ex}\label{ex:nondist}
Consider the commutative local ring $A = k[x,y]/(x,y)^2$. Then, because $A$ is commutative, the lattice of thick subcategories of $\sfD^\mathrm{perf}(A)$ is distributive; in this case the only thick subcategories are $0$ and $\sfD^\mathrm{perf}(A)$. 

We claim that the lattice of thick subcategories of $\sfD^\mathrm{b}(A)$ is not distributive. This can be deduced as follows using the analysis performed in \cite{EL}. We use that $\sfD^\mathrm{b}(A) \cong \sfD^\mathrm{perf}(k\langle x,y\rangle)$ where we view the free algebra as a formal dg algebra with $x$ and $y$ in degree $1$. For a homogeneous element $f\in k\langle x,y\rangle$ let $C_f = \cone(f)$. Then we have
\[
\thick(C_{xy}) = \thick(C_{xy}) \cap (\thick(C_x, C_y))
\]
by \cite{EL}*{Lemma~4.2(1)} but, on the other hand, 
\[
(\thick(C_{xy}) \cap \thick(C_x)) \vee (\thick(C_{xy}) \cap \thick(C_y)) = 0 \vee 0 = 0
\]
by \cite{EL}*{Proposition~4.22 and Theorem~4.23}  as the elements $x,y,$ and $xy$ are all good (see their Definition~4.10).

    %
    %
%
    %
		%
		%
%
\end{ex}

This raises another natural question.

\begin{qn}
For which rings $A$ does distributivity of $\Thick(\sfD^\mathrm{perf}(A))$ imply that $\Thick(\sfD^\mathrm{b}(A))$ is distributive?
\end{qn}

Turning now to the stable category, Example~\ref{ex:S3} also raises the question of which class of finite groups is characterised by distributivity of $\Thick(\smodu kG)$.

\begin{qn}
For which finite groups $G$ is it true that $\Thick(\smodu kG)$ is a distributive lattice?
\end{qn}

Of course we know that this holds for $p$-nilpotent groups since already $\Thick(\sfD^\mathrm{b}(kG))$ is distributive. It turns out the $S_3$ example is also typical.

\begin{prop}
If $k$ is an algebraically closed field of characteristic $p$ and $G$ has cyclic $p$-Sylow subgroups, i.e.\ $kG$ has finite representation type, then $\Thick(\smodu kG)$ is distributive.
\end{prop}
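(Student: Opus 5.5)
We reduce to blocks and then argue that each block contributes a lattice with very few elements. The block decomposition of $kG$ gives $\smodu kG \simeq \prod_B \smodu B$ over the blocks $B$ of $kG$, and hence
\[
\Thick(\smodu kG) = \prod_B \Thick(\smodu B).
\]
A product of distributive lattices is distributive, so it suffices to show that each $\Thick(\smodu B)$ is distributive. Semisimple blocks have zero stable category, so we may restrict to non-semisimple blocks. When $G$ has cyclic Sylow $p$-subgroups, each such block $B$ has a cyclic defect group. By the classical theory of Brauer and Dade, $B$ is then a Brauer tree algebra. We take this structural description as the input; it would be cited, not reproved.

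Next we show that for a non-semisimple block $B$ of finite type, $\Thick(\smodu B)$ has only the two elements $0$ and $\smodu B$. A two-element chain is trivially distributive. This is the same phenomenon as in Example~\ref{ex:S3}. The key point is that $\smodu B$ is connected in a strong sense: every non-zero object $M$ generates the whole category.

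The generation argument would run as follows. A thick subcategory $\mathcal{C}\neq 0$ of $\smodu B$ contains some indecomposable non-projective $M$. Because $\mathcal{C}$ is closed under $\Omega^{\pm1}$, it contains the whole $\Omega$-orbit of $M$. We then use the shape of the stable Auslander--Reiten quiver of a Brauer tree algebra, which is $\mathbb{Z}A_{m}/\langle\tau^{e}\rangle$ with $\tau=\Omega^2$. Along each sectional path, the irreducible maps fit into Auslander--Reiten triangles. Each such triangle has the form
\[
X \to E \to \tau^{-1}X \to \Sigma X
\]
with $E$ having at most two indecomposable summands. Since thick subcategories are closed under summands, we can propagate from one vertex to its neighbours. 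Concretely, the object at the mouth of a tube together with its translates generates everything by induction along the rays. Starting from an arbitrary indecomposable, we first reach the mouth by taking cones of irreducible maps and their compositions; the cone of the inclusion of the radical-layer submodule is a module of smaller length, up to syzygy. Induction on length (for uniserial modules) handles this step. For Brauer tree algebras every indecomposable non-projective is a "string" module, and its cone operations behave uniserially along the two arms.

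The main obstacle is carrying out this inductive generation cleanly for an arbitrary Brauer tree, as opposed to the single-orbit case $kS_3$. The first approach I would try is to avoid the combinatorics altogether. By Rickard's theorem, a Brauer tree algebra is derived equivalent to the Brauer star algebra with the same numerical invariants. This derived equivalence induces a stable equivalence of triangulated categories, so the question reduces to the star, which is a symmetric Nakayama algebra. There all indecomposables are uniserial, and a direct argument works. For a uniserial $M$ with top $S_i$, the radical inclusion gives a triangle relating $M$, $\rad M$ and $S_i$. Lengths drop, so thick closure of any nonzero object contains some simple module. Applying $\Omega$ to simples cycles through all the simples of the connected Nakayama algebra. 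Hence all simples, and therefore everything, lie in the thick closure. This establishes the two-element lattice blockwise, and hence distributivity of $\Thick(\smodu kG)$.
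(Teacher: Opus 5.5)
Your overall strategy matches the paper's. You split $\smodu kG$ into blocks and identify each non-semisimple block, up to a triangle equivalence, with a symmetric Nakayama algebra; your appeal to Rickard is fine. You then show every nonzero object generates, so $\Thick(\smodu kG)$ is a product of copies of $\{0\le 1\}$. The gap is in that generation step, which is the only step with real content.

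\textbf{The gap.} The argument you settle on uses the triangle $\rad M \to M \to S_i \to \Sigma\rad M$ to make the length drop, concluding that $\thick(M)$ contains a simple. This inference is not available. A thick subcategory $\mathcal C$ containing one vertex of a triangle need not contain either of the other two; you need two of the three vertices in $\mathcal C$ to conclude anything about the third. Knowing $M\in\mathcal C$ tells you nothing about $\rad M$ or $S_i$, so the induction on length never starts. The same objection applies to your earlier idea of reaching the mouth of the tube by taking cones of irreducible maps: the cone of $X\to Y$ lies in $\mathcal C$ only once both $X$ and $Y$ do.

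\textbf{The fix.} The correct mechanism is the one you mention in your third paragraph and then abandon, and it is exactly the paper's proof. Let $X\in\mathcal C$ be indecomposable non-projective.
\begin{itemize}
\item For a symmetric algebra $\tau^{-1}X\cong\Omega^{-2}X=\Sigma^2X$, so $\tau^{-1}X\in\mathcal C$.
\item In the Auslander--Reiten triangle $X\to E\to\tau^{-1}X\to\Sigma X$, two vertices therefore lie in $\mathcal C$, hence so does $E$.
\item By closure under summands, every direct successor of $X$ in the stable AR quiver lies in $\mathcal C$.
\item The AR triangle ending at $X$, with $\tau X\cong\Sigma^{-2}X\in\mathcal C$, gives the same for every direct predecessor of $X$.
\end{itemize}
The stable AR quiver $\ZZ A_{em}/\tau^e$ is connected, so $\mathcal C$ contains every indecomposable non-projective, i.e.\ $\mathcal C=\smodu B$. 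This uses nothing about the shape of the Brauer tree. So the ``main obstacle'' you identify does not exist, and the detour through the Brauer star, while harmless, is unnecessary.
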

\begin{proof}
Blocks with cyclic defect group are stably equivalent to symmetric Nakayama algebras (see \cite{GR}) i.e.\ to algebras of the form $\Lambda = kZ_e/\rad(kZ_e)^{em+1}$ where $Z_e$ is the cyclic quiver with $e$ vertices and $m\geq 1$. In this case the stable Auslander-Reiten quiver is the tube $\ZZ A_{me}/\tau^e$ as a translation quiver. For any symmetric algebra we have $\tau = \Sigma^{-2}$ and so if $X \in \smodu \Lambda$ is non-zero then $\thick(X) = \smodu \Lambda$ as one easily sees from the Auslander-Reiten triangles using that $\thick(X)$ is closed under suspensions and hence also under the Auslander-Reiten translation. Thus $\Thick(\smodu kG)$ is a product of copies of the distributive lattice $\{0\leq 1\}$, one for each block with non-trivial defect group, and so is distributive as claimed.
\end{proof}

There are also examples which are neither $p$-nilpotent nor of finite representation type.

\begin{ex}
Let $k$ be an algebraically closed field of characteristic $2$ and consider the group algebra $kA_4$ of the alternating group on $4$ symbols. The algebra $kA_4$ is tame, but not finite representation type, and $A_4$ is not $2$-nilpotent. However, $\smodu kA_4$ has distributive lattice of thick subcategories. One can deduce this from the description of its Auslander-Reiten quiver as given in \cite{Erdmann}*{II.7.4}. Indeed, there is a single non-periodic component containing the simple modules. Again, using that $\Sigma^{-2} = \tau$ so every thick subcategory is closed under the Auslander-Reiten translation, it is easily seen that the trivial module generates $\smodu kA_4$ and so every thick subcategory is a radical thick tensor ideal. Distributivity then follows as the tensor ideals form a distributive lattice (the stable category is a rigid tt-category).
\end{ex}

Actually one can also deduce these examples, and more, from beautiful work of Benson, Carlson, and Robinson \cite{BCR} and Benson \cite{Benson}. By \cite{Benson}*{Theorem~1.1} if $G$ is a finite group and $k$ has characteristic $p$ then provided the centralizer of every element of order $p$ in $G$ is $p$-nilpotent we have $\thick(k) = \smodu B_0$ where $B_0$ is the principal block of $kG$. So provided $B_0 = kG$ we see that the lattice of thick subcategories of $\smodu kG$ is distributive. This applies, for instance, to $\mathrm{PSL}(2, 7)$ in characteristic $2$. 

\begin{ex}
Let us conclude with an example where $\smodu kG$ does not have distributive lattice of thick subcategories; this is inspired by \cite{BCR2}*{Example~4}. Let $G = C_3 \times S_3$ and $k$ an algebraically closed field of characteristic $3$. Then the principal block is the only block and there are two simples $k$ and $\varepsilon$. We denote by $L$ the projective cover of $k$ as a $kS_3$-module viewed as a $kG$-module. Then we have
\[
\thick(L) \cap (\thick(k) \vee \thick(\varepsilon)) = \thick(L) \cap \smodu(kG) = \thick(L)
\]
but
\[
\thick(L) \cap \thick(k) \subsetneq \thick(L) \text{ and } \thick(L) \cap \thick(\varepsilon) = 0.
\]
We sketch the argument; one finds essentially everything one needs to deduce this in \cite{BCR2}. It is easy to check that $\thick(k\oplus L)$ contains $\varepsilon$ (for instance by working over $kS_3$ cf.\ Example~\ref{ex:S3}) and so must be all of $\smodu kG$. So if $L \in \thick(k)$ it would follow that $k$ generated the principal block and it doesn't as there are modules with trivial cohomology. One can easily calculate that $\varepsilon$ and $L$ are orthogonal to one another, i.e.\ there are no maps between any of their syzygies, and so $\thick(L) \cap \thick(\varepsilon) = 0$ as claimed.
\end{ex}

There are several other directions in which one could explore; the topic is relatively new.



\end{document}